\documentclass[reqno]{amsart}

\usepackage{amsmath}
\usepackage{amsfonts}
\usepackage{amsthm}
\usepackage{amssymb}
\usepackage{graphicx}
\usepackage[rightcaption]{sidecap}
\usepackage{bm}
\usepackage{dsfont}
\usepackage{color}
\usepackage{hyperref}
\usepackage{enumerate}
\usepackage{comment}
\usepackage[font=footnotesize]{caption}
\usepackage[all]{xy}
\usepackage{xy}
\usepackage{tikz}
\usepackage{paralist}
\allowdisplaybreaks
\usepackage{pgfplots}
\pgfplotsset{%
   every tick label/.append style = {font=\tiny},
   every axis label/.append style = {font=\scriptsize}
}

\usepackage{a4wide}

\numberwithin{equation}{section}

\newtheorem{thm}{Theorem}[section]
\newtheorem*{thm*}{Theorem}
\newtheorem*{conj*}{Conjecture}
\newtheorem{cor}[thm]{Corollary}

\theoremstyle{definition}

\newtheorem{qst}[thm]{Question}

\newtheorem{dfn}[thm]{Definition}
\newtheorem{rmk}[thm]{Remark}

\newcommand{\di}{\mathrm{d}}
\newcommand{\R}{\mathbb{R}}
\newcommand{\Z}{\mathbb{Z}}

\begin{document}

\title[An observation about conformal points on surfaces]{An observation about conformal points on surfaces}

\author[P. Albers]{Peter Albers}
\address{Universit\"at Heidelberg, Institut f\"ur Mathematik, \newline\indent Im Neuenheimer Feld 205, 69120 Heidelberg, Germany}
\email{palbers@mathi.uni-heidelberg.de}
\author[G. Benedetti]{Gabriele Benedetti}
\address{VU Amsterdam, Faculty of Sciences, Department of Mathematics,\newline\indent De Boelelaan 1111, 1081 HV Amsterdam, The Netherlands}
\email{g.benedetti@vu.nl}
\date{\today}
\subjclass[2020]{53C18 (Primary) 57R22 (Secondary).}
\keywords{Conformal points, Poincar\'e--Hopf, line fields}

\begin{abstract} We study the existence of points on a compact oriented surface at which a symmetric bilinear two-tensor field is conformal to a Riemannian metric. We give applications to the existence of conformal points of surface diffeomorphisms and vector fields.
\end{abstract}

\maketitle
\section{Statement of results}\label{s:one}
\subsection{Conformal points}
Let $\Sigma$ be a compact, oriented surface, possibly with non-empty boundary $\partial \Sigma$. Denote by $C_1,\ldots,C_n$ the boundary components of $\Sigma$ with the induced orientation. Let $\mathrm{Sym}((T^*\Sigma)^{\otimes2})\to\Sigma$ be the bundle of symmetric bilinear tensors on $\Sigma$. Fix a Riemannian metric $g$ on $\Sigma$, that is, a positive-definite section of $\mathrm{Sym}((T^*\Sigma)^{\otimes2})\to\Sigma$.
\begin{dfn}
We say that a section $h$ of $\mathrm{Sym}((T^*\Sigma)^{\otimes2})\to\Sigma$ is \textit{conformal to $g$ at the point $z\in \Sigma$} if there exists $c\in\R$ such that $h_z=cg_z$.
\end{dfn}
Motivated by \cite{AT}, the goal of this note is to study the set of points 
\[
\mathcal C(g,h)\subset\Sigma
\]
at which $h$ is conformal to $g$ (see Theorem \ref{t:main1}) in order to investigate conformal points of diffeomorphisms $F\colon\Sigma\to\Sigma$, in which case $h=F^*g$ (see Theorem \ref{t:main2} and Corollary \ref{c:main}), and of vector fields (see Corollary \ref{c:vf}).

Our main observation is that $\mathcal C(g,h)$ is the zero-set of a section $H^a$ in a distinguished vector bundle $E^a\to\Sigma$ over the surface, which we describe now. Let $\mathrm{End}(T\Sigma)\to\Sigma$ be the bundle of endomorphisms of $T\Sigma$ and let
\begin{equation}\label{eqn:bundle_E_A}
E^a\subset \mathrm{End}(T\Sigma)
\end{equation}
be the subbundle of those endomorphisms which are symmetric with respect to $g$ and have zero trace. For all $z\in\Sigma$, an element of $R\in E^a_z$ has the matrix expression
\[
\begin{pmatrix}
a&b\\ b&-a
\end{pmatrix}\qquad a,b\in\R,
\]
with respect to a positive, orthonormal basis of $T_z\Sigma$. Thus, any non-zero element $R\in E^a_z$ is, up to a positive scalar multiple, a reflection $R\colon T_z\Sigma\to T_z\Sigma$ along a line in $T_z\Sigma$. In particular, the $S^1$-bundle associated with $E^a$ is the bundle of unoriented lines in $T\Sigma$. This $S^1$-bundle is doubly covered by the bundle of oriented lines in $T\Sigma$ which, in turn, is the unit-tangent bundle  of $\Sigma$, that is, the $S^1$-bundle associated with $T\Sigma\to\Sigma$. The above discussion shows that $E^a$ is an oriented plane bundle over $\Sigma$ with Euler number
\begin{equation}\label{e:chi}
	e(E^a)=2e(T\Sigma)=2\chi(\Sigma).
\end{equation}
Given a symmetric bilinear two-tensor field $h$ over $\Sigma$, let $H$ be the section of $\mathrm{End}(T\Sigma)$ representing $h$ with respect to $g$, namely
\begin{equation}\label{e:H}
	g_z(u,H_zv)=h_z(u,v),\qquad \forall z\in\Sigma,\ \forall\,u,v\in T_z\Sigma.
\end{equation}
We denote by
\begin{equation}\label{e:Ha}
H^a:=H-\frac{\mathrm{tr} H}{2}I
\end{equation}
the section of $E^a$ corresponding to the trace-free part of $H$. Here $I$ is the section of $\mathrm{End}(T\Sigma)$ such that $I_z$ is the identity of $T_z\Sigma$ for all $z\in\Sigma$.

Thus, we conclude that
\[
z\in\mathcal C(g,h)\quad\iff\quad H^a_z=0.
\]
From this relationship we see that, generically, $h$ has only finitely many conformal points and all of them lie in the interior of $\Sigma$. In this case, we can use the Poincar\'e--Hopf Theorem for unoriented line fields on oriented surfaces with boundary to algebraically count conformal points. To give the precise statement, let us introduce some notation under the assumption that $\mathcal C(g,h)$ is finite and $\mathcal C(g,h)\subset\Sigma\setminus\partial\Sigma$. For each $z\in\mathcal C(g,h)$, we define 
\[
\mathrm{ind}_{(g,h)}(z)\in\Z
\]
as the index of $z$ seen as a zero of the section $H^a$ of $E^a\to\Sigma$. We count the elements in $\mathcal C(g,h)$ algebraically via the integer
\begin{equation}\label{e:Z}
	[\mathcal C(g,h)]:=\sum_{z\in \mathcal C(g,h)}\mathrm{ind}_{(g,h)}(z)\in\Z.
\end{equation}
Moreover, for every boundary component $C_i$ of $\Sigma$, with $i=1,\ldots,n$, we define 
\begin{equation}\label{e:w}
w_i(g,h)\in\Z
\end{equation}
as the winding number of the section $H^a|_{C_i}$ with respect to $R^i\in E^a$, where $R^i(z)$ is the reflection along the line $T_z\partial\Sigma\subset T_z\Sigma$ for $z\in C_i$.
\begin{thm}\label{t:main1}
Let $g$ be a Riemannian metric on a compact, oriented surface $\Sigma$. Then the following two statements hold.
\begin{enumerate}
\item For any symmetric bilinear two-tensor field $h$ over $\Sigma$ such that $\mathcal C(g,h)$ is finite and $\mathcal C(g,h)\subset \Sigma\setminus\partial\Sigma$, the equality
\begin{equation}\label{e:main1}
	[\mathcal C(g,h)]=2\chi(\Sigma)+\sum_{i=1}^nw_i(g,h)
\end{equation}
holds, where $\chi(\Sigma)$ denotes the Euler characteristic of $\Sigma$.
\item Let $\mathcal C\subset \Sigma\setminus\partial\Sigma$ be a finite set of points, $\iota\colon \mathcal C\to\mathbb Z$ an arbitrary function, and $w_1,\dots,w_n\in \mathbb Z$ arbitrary integers satisfying
\begin{equation}\label{e:data}
	\sum_{z\in \mathcal C}\iota(z)=2\chi(\Sigma)+\sum_{i=1}^nw_i.
\end{equation}
Then there exists a symmetric bilinear two-tensor field $h$ over $\Sigma$ such that $\mathcal C=\mathcal C(g,h)$, $\iota(z)=\mathrm{ind}_{(g,h)}(z)$ for all $z\in\mathcal C$ and $w_i=w_i(g,h)$ for all $i=1,\ldots,n$.
\end{enumerate}
\end{thm}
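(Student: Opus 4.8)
The plan is to read both statements off the section $H^a$ of the oriented plane bundle $E^a\to\Sigma$, whose zero set is $\mathcal C(g,h)$ by the discussion preceding the theorem. Part (1) then becomes a relative Poincar\'e--Hopf computation, and part (2) a realization statement for sections of $E^a$ with prescribed isolated zeros and prescribed boundary behaviour.

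For part (1), note first that since $\mathcal C(g,h)$ is finite and contained in $\Sigma\setminus\partial\Sigma$, the section $H^a$ has isolated zeros and is nowhere zero on $\partial\Sigma$. I would apply the Poincar\'e--Hopf theorem for sections of an oriented plane bundle over a compact oriented surface with boundary to identify $[\mathcal C(g,h)]$ with the relative Euler number $e(E^a,H^a|_{\partial\Sigma})$, the obstruction to extending $H^a|_{\partial\Sigma}$ to a nowhere-zero section. Comparing the boundary data $H^a|_{C_i}$ with the reference reflections $R^i$ and using additivity of the relative Euler class gives
\[
e(E^a,H^a|_{\partial\Sigma})=e(E^a,R)+\sum_{i=1}^n w_i(g,h),\qquad R=(R^i),
\]
which is exactly how the winding numbers $w_i(g,h)$ enter. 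It remains to evaluate the reference contribution $e(E^a,R)=2\chi(\Sigma)$. For this I would use the squaring identification $E^a\cong(T\Sigma)^{\otimes2}$ of oriented plane (equivalently, complex line) bundles underlying \eqref{e:chi}, under which $R^i$ is the square of the positively oriented unit tangent field $t$ of $\partial\Sigma$; since squaring doubles Euler numbers, $e(E^a,R)=2\,e(T\Sigma,t)$. Finally $e(T\Sigma,t)=\chi(\Sigma)$ follows by doubling $\Sigma$ along $\partial\Sigma$: the field $t$ and its mirror copy glue to a vector field on the closed double $D\Sigma$ whose zeros are the interior zeros of the two copies, so $2\,e(T\Sigma,t)=\chi(D\Sigma)=2\chi(\Sigma)$. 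Assembling these identities yields \eqref{e:main1}.

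For part (2), I would first reduce the construction of $h$ to that of a section of $E^a$: for any section $s$ of $E^a$ the tensor $h(u,v):=g_z(u,s_zv)$ is symmetric with vanishing trace, so that its associated trace-free endomorphism is $s$ itself; hence $\mathcal C(g,h)$, the indices $\mathrm{ind}_{(g,h)}$, and the winding numbers $w_i(g,h)$ coincide with the zero set, the zero-indices, and the windings of $s$. It thus suffices to build a section $s$ of $E^a$ with zero set $\mathcal C$, with $\mathrm{ind}_z(s)=\iota(z)$ for all $z\in\mathcal C$, and with $s|_{C_i}$ winding $w_i$ relative to $R^i$. I would remove small open disks $D_z$ about the points of $\mathcal C$ and trivialize $E^a$ over $\Sigma_0:=\Sigma\setminus\bigsqcup_z\overline{D_z}$; this is possible because $\Sigma_0$ has nonempty boundary and is therefore homotopy equivalent to a wedge of circles, over which every oriented plane bundle is trivial. (The only case with $\partial\Sigma_0=\varnothing$ is $\partial\Sigma=\varnothing$ and $\mathcal C=\varnothing$, which by \eqref{e:data} forces $\chi(\Sigma)=0$ and is settled by a nowhere-zero constant section.) In this trivialization I look for a nowhere-zero section $s$ over $\Sigma_0$, that is, a map $\Sigma_0\to\R^2\setminus\{0\}$, winding $w_i$ relative to $R^i$ on each $C_i$ and winding $\iota(z)$ on each $\partial D_z$ (oriented as the boundary of $D_z$); filling each $D_z$ by the local model $\zeta\mapsto\zeta^{\iota(z)}$ then inserts an isolated zero of index $\iota(z)$.

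Such a map $\Sigma_0\to\R^2\setminus\{0\}\simeq S^1$ exists if and only if the total winding of its boundary restrictions vanishes, because $\partial\Sigma_0$ is null-homologous in $\Sigma_0$. With the boundary orientations of $\Sigma_0$, the component $C_i$ contributes $r_i+w_i$, where $r_i$ is the winding of $R^i$ in the trivialization and $\sum_i r_i=e(E^a,R)=2\chi(\Sigma)$ by part (1), while each $\partial D_z$ contributes $-\iota(z)$, its $\Sigma_0$-orientation being opposite to its $D_z$-orientation. The vanishing condition therefore reads $\sum_{z}\iota(z)=2\chi(\Sigma)+\sum_i w_i$, which is precisely the hypothesis \eqref{e:data}; so the section $s$, and with it the tensor $h$, exists. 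I expect the main obstacle to lie not in any single topological input --- the relative Poincar\'e--Hopf theorem, the triviality of $E^a$ over $\Sigma_0$, and the doubling argument are all standard --- but in the consistent bookkeeping of orientations and signs (of the fibres of $E^a$, of the induced orientations of the $C_i$, and of every winding number) needed to make the reference computation $e(E^a,R)=2\chi(\Sigma)$ and the realization obstruction come out with the signs that render \eqref{e:main1} and \eqref{e:data} identical.
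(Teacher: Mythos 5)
Your part (1) is correct but follows a genuinely different route from the paper: you work directly with the relative Euler number $e(E^a,H^a|_{\partial\Sigma})$ on the surface with boundary, shift the reference section from $H^a|_{\partial\Sigma}$ to $R=(R^i)$ at the cost of $\sum_i w_i$, and evaluate $e(E^a,R)=2e(T\Sigma,t)=2\chi(\Sigma)$ via the squaring identification $E^a\cong (T\Sigma)^{\otimes 2}$ and a doubling argument. The paper instead caps each $C_i$ with a disc, extends $H^a$ with a single zero of index $2-w_i$ at each disc centre, and reduces to the closed case; the ``$2$'' there is exactly your factor $2e(TD^2,t)$. Both are sound; yours avoids constructing $\hat\Sigma$ but relies on the relative Poincar\'e--Hopf theorem and its additivity under change of boundary reference, which you invoke without proof (acceptable, since the paper also cites the literature for its topological inputs).

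Part (2), however, has a genuine gap. You trivialize $E^a$ over $\Sigma_0=\Sigma\setminus\bigsqcup_z \overline{D_z}$ and then treat the winding $\iota(z)$ of $s|_{\partial D_z}$ \emph{in that trivialization} as the index of the zero produced by filling $D_z$ with $\zeta\mapsto\zeta^{\iota(z)}$. But the index of a zero is the winding of $s|_{\partial D_z}$ in a trivialization of $E^a|_{D_z}$, and the two trivializations differ on $\partial D_z$ by a clutching map of some degree $d_z$; these degrees are exactly where the nontriviality of $E^a$ hides, and one has $\sum_i r_i - \sum_z d_z = e(E^a,R)$ rather than your claimed $\sum_i r_i = e(E^a,R)=2\chi(\Sigma)$. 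The claim fails most visibly for closed $\Sigma$: there the left-hand side is an empty sum, so your vanishing condition would read $\sum_z\iota(z)=0$ instead of $\sum_z\iota(z)=2\chi(\Sigma)$. In general the two errors cancel in the \emph{total} count, but the section you build has index $\iota(z)-d_z$ at $z$, not $\iota(z)$, so the prescribed individual indices are not realized. The fix is either to prescribe winding $\iota(z)+d_z$ on $\partial D_z$ (and, when $\partial\Sigma\neq\varnothing$, to normalize the trivialization so that all $d_z=0$ by dumping the total clutching degree into one $C_i$ --- impossible in the closed case), or to do what the paper does: keep one auxiliary circle $\partial D$ separating an outer nonvanishing section $H^{\mathrm{out}}$ from an inner one $H^{\mathrm{in}}$, so that the entire Euler class is recorded once as the relative winding $w(H^{\mathrm{out}},H^{\mathrm{in}})=2\chi(\Sigma)$, and all local indices are measured against the single inner trivialization $H^{\mathrm{in}}$.
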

\begin{rmk}
For the convenience of the reader, we give the short proof of Theorem \ref{t:main1} in Section \ref{s:two} although this can be deduced from the literature. For statement (1), we refer to \cite{Morse}, \cite{Pugh} and \cite{Gottlieb} which deal with the Poincar\'e--Hopf Theorem for oriented line fields on surfaces with boundary and to \cite[III.2.2]{Hopf}, \cite{Markus}, \cite{Jaenich} and \cite{CG} which deal with the Poincar\'e--Hopf theorem for unoriented line fields on surfaces without boundary. For statement (2), we refer to the Extension Theorem in \cite[p.~145]{GP}. Finally, we notice that, passing to the orientation double cover, Theorem \ref{t:main1} also holds for non-orientable surfaces.
\end{rmk}
We discuss now two situations where the set $\mathcal C(g,h)$ naturally appears. 
\subsection{Carath\'eodory's conjecture}
First, let us consider a smooth embedding $\rho\colon S^2\to\R^3$. Here $\Sigma=S^2$ and we take $g^\rho$ and $h^\rho$ to be the first and the second fundamental form of the embedding $\rho$, respectively, with respect to the ambient Euclidean metric. The elements of $\mathcal C(g^\rho,h^\rho)$ are the so-called umbilical points, namely points at which the two principal curvatures of the embedding coincide. In this case, \eqref{e:main1} yields the well-known result that $[\mathcal C(g^\rho,h^\rho)]=4$, namely that the algebraic count of umbilical points is equal to four.
For example, when $\rho$ is an ellipsoid of revolution, $\mathcal C(g^\rho,h^\rho)$ consists exactly of the two poles, both having index two. In general, it is natural to ask which further conditions must the points $z\in \mathcal C(g^\rho, h^\rho)$ and their indices satisfy besides $[\mathcal C(g^\rho,h^\rho)]=4$. For instance, Carath\'eodory's conjecture \cite{GH,SG} asserts that convexity of the embedding $\rho$ implies $\mathrm{ind}(z)\leq 2$ for all $z\in\mathcal C(g^\rho,h^\rho)$, and, in particular, entails that $\mathcal C(g^\rho,h^\rho)$ always contains at least two points.

\subsection{Conformal points of a diffeomorphism}
The second situation in which $\mathcal C(g,h)$ naturally appears is when $h=F^*g$, where $F\colon\Sigma\to\Sigma$ is any orientation-preserving diffeomorphism of $\Sigma$. In this case, $\mathcal C(g,F^*g)$ is the set of so-called conformal points of $F$ (with respect to $g$). Assuming that $\mathcal C(g,F^*g)$ is finite and $\mathcal C(g,F^*g)\subset\Sigma\setminus\partial\Sigma$, we are going to give a formula for $w_i(g,F^*g)$ in terms of the behavior of $F$ at the boundary. In order to state the result, for $i=1,\ldots,n$ let $\nu_i\colon C_i\to T\Sigma$ be the outward normal at the boundary component $C_i$ and  $\tau_i\colon C_i\to T\Sigma$ be the unit vector tangent to $C_i$ in the positive direction. The pair $(\nu_i,\tau_i)$ then forms a positive orthonormal frame for $g$ along $C_i$. We trivialize $T\Sigma|_{\partial\Sigma}=\sqcup_{i}C_i\times \R^2$ using $(\nu_i,\tau_i)$ at $C_i$, $i=1,\ldots,n$. Since $F$ maps boundary components to boundary components (not necessarily the same) we can express $\di F$ in this trivialization as 
\begin{equation}\label{e:dF}
\di F\big|_{C_i}=:N_i=c_i\begin{pmatrix}
	a_i&0\\b_i&1
\end{pmatrix}.
\end{equation}
Here $a_i,c_i\colon C_i\to(0,\infty)$, $b_i\colon C_i\to\R$ and $(a_i,b_i)$ is never equal to $(1,0)$ since $F$ has no conformal point on $C_i$ by assumption. 
\begin{thm}\label{t:main2}
For all $i=1,\ldots,n$ we have the equality
\begin{equation}\label{e:main2}
w_i(g,F^*g)=w(a_i-1,b_i),
\end{equation}
where $w(a_i-1,b_i)$ is the winding number of the curve $(a_i-1,b_i)\colon C_i\cong S^1\to\R^2\setminus\{0\}$ around the origin.
\end{thm}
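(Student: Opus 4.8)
The plan is to compute the section $H^a$ along each boundary component $C_i$ in the orthonormal frame $(\nu_i,\tau_i)$, identify the reference section $R^i$ in the resulting trivialization, and thereby express $w_i(g,F^*g)$ as a concrete planar winding number, which I will then match to $w(a_i-1,b_i)$ by a homotopy. First I would record the matrix of $H$. Since $h=F^*g$ means $h_z(u,v)=g_{F(z)}(\di F_z u,\di F_z v)$, the defining relation \eqref{e:H} shows $H_z=(\di F_z)^*\di F_z$, the composition with the metric adjoint. In the $g$-orthonormal frames $(\nu_i,\tau_i)$ at the source and $(\nu_j,\tau_j)$ at the target component $F(C_i)=C_j$, the adjoint is the matrix transpose, so along $C_i$ the matrix of $H$ is $N_i^\top N_i$. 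A direct computation from \eqref{e:dF} then gives, via \eqref{e:Ha},
\[
H^a\big|_{C_i}=\frac{c_i^2}{2}\begin{pmatrix} a_i^2+b_i^2-1 & 2b_i\\ 2b_i & -(a_i^2+b_i^2-1)\end{pmatrix},
\]
which in the coordinates $(a,b)$ on $E^a$ coming from the matrix form $\left(\begin{smallmatrix} a&b\\ b&-a\end{smallmatrix}\right)$ is the curve $\tfrac{c_i^2}{2}(a_i^2+b_i^2-1,\,2b_i)$.

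Next I would identify the reference section. The reflection $R^i$ along the tangent line $\R\tau_i$ fixes $\tau_i$ and negates $\nu_i$, so in the frame $(\nu_i,\tau_i)$ it is the constant matrix $\left(\begin{smallmatrix} -1&0\\ 0&1\end{smallmatrix}\right)$, i.e. the constant point $(-1,0)$ in the coordinates above. Because $(\nu_i,\tau_i)$ is a global frame along $C_i$, it trivializes $E^a|_{C_i}$, and in such a trivialization the relative winding number of two nowhere-zero sections is the difference of their absolute winding numbers. As $R^i$ is constant it contributes $0$ and the positive factor $c_i^2/2$ is irrelevant to the winding, so
\[
w_i(g,F^*g)=w\big(a_i^2+b_i^2-1,\,2b_i\big).
\]
Here I use that $(a_i,b_i)\neq(1,0)$, so that on $\{a>0\}$ the displayed curve never vanishes.

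Finally I would prove $w(a_i^2+b_i^2-1,2b_i)=w(a_i-1,b_i)$. The key observation is that the straight-line homotopy
\[
H_t(a,b)=\big((1-t)(a-1)+t(a^2+b^2-1),\;(1+t)b\big),\qquad t\in[0,1],
\]
joining $(a-1,b)$ to $(a^2+b^2-1,2b)$ stays in $\R^2\setminus\{0\}$ on the domain $\{a>0\}\setminus\{(1,0)\}$: its second component vanishes only when $b=0$, and there the first component equals $(a-1)(1+ta)$, which for $a>0$ vanishes only at $a=1$. Since $z\mapsto(a_i(z),b_i(z))$ takes values in this domain, the two winding numbers agree, giving \eqref{e:main2}.

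The main obstacle is the bookkeeping of the middle step: correctly translating the intrinsic definition of $w_i$ (a relative winding number in the plane bundle $E^a$ measured against $R^i$) into the planar winding number of $(a_i^2+b_i^2-1,2b_i)$, and in particular verifying that the positively oriented frame $(\nu_i,\tau_i)$ induces the chosen orientation of $E^a$ so that no sign is dropped. Once this identification is secured, the matrix computation of $N_i^\top N_i$ and the explicit homotopy are both routine.
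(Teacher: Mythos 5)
Your proof is correct. It starts from the same computation as the paper---expressing $F^*g|_{C_i}$ by the endomorphism $N_i^\top N_i=c_i^2Q_i$ in the positive orthonormal frame $(\nu_i,\tau_i)$---but finishes by a genuinely different route. The paper passes to the $\R P^1$-valued map $q_i$ recording the eigendirection of $Q_i$ with larger eigenvalue, identifies $w_i(g,F^*g)$ with the degree of $q_i$, and computes that degree by taking the normal direction as a regular value and differentiating the eigenvector equation $(Q-\lambda I)v=0$ to match signs with the transverse crossings of $(a_i-1,b_i)$ through the positive real axis. You instead write the trace-free part of $N_i^\top N_i$ explicitly as the planar curve $\tfrac{c_i^2}{2}(a_i^2+b_i^2-1,\,2b_i)$ in the $(a,b)$-coordinates of $E^a$, note that the reference reflection $R^i$ is the constant $(-1,0)$ there, and join $(a^2+b^2-1,2b)$ to $(a-1,b)$ by a linear homotopy that avoids the origin on $\{a>0\}\setminus\{(1,0)\}$ (at $b=0$ the first component is $(a-1)(1+ta)$, nonzero for $a>0$, $a\neq1$). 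This avoids both the genericity assumption on $(a_i-1,b_i)$ and the eigenvalue perturbation argument; the one point you rightly flag, that the $(a,b)$-coordinates of the positive frame $(\nu_i,\tau_i)$ give an oriented trivialization of $E^a|_{C_i}$, does hold, since the orientation of $E^a$ is exactly the one under which the line of angle $\theta$ corresponds to $(\cos2\theta,\sin2\theta)$ and hence $e(E^a)=2\chi(\Sigma)$ as in \eqref{e:chi}. The paper's version makes the geometric meaning of $w_i$ (rotation of the principal direction of $F^*g$ relative to the boundary tangent) more visible; yours is shorter and more self-contained.
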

This formula, which will be proved in Section \ref{s:three}, allows us to compute $w_i(g,F^*g)$ if we understand the behavior of $F$ at points on the boundary sufficiently well. A remarkable example of this phenomenon is illustrated by the next corollary.
\begin{cor}\label{c:main}
If $F\colon\Sigma\to \Sigma$ is the identity on the boundary and preserves an area form on $\Sigma$, then \begin{equation}\label{e:cor1}
w_i(g,F^*g)=0,\qquad \forall\, i=1,\ldots,n.
\end{equation}
It follows that for this type of diffeomorphisms
\begin{equation}\label{e:cor2}
[\mathcal C(F)]=2\chi(\Sigma),
\end{equation}
that is, the number of conformal points of such an $F$ is twice the Euler characteristic.
\end{cor}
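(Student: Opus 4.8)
The plan is to compute the winding numbers $w_i(g,F^*g)$ through Theorem \ref{t:main2}, by determining the boundary data $a_i,b_i,c_i$ of \eqref{e:dF}, and then to feed the result into Theorem \ref{t:main1}(1). The guiding idea is that the two hypotheses each constrain the matrix $N_i$: being the identity on $\partial\Sigma$ fixes the tangential behaviour of $\di F$, while area-preservation fixes its Jacobian, and together they pin $a_i$ to the constant value $1$.

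First I would read off $c_i$. Since $F$ restricts to the identity on each component $C_i$, its tangential derivative there is the identity, so $\di F(\tau_i)=\tau_i$. Comparing with the second column of $N_i$ in \eqref{e:dF}, which equals $c_i\,\tau_i$, gives $c_i\equiv 1$. Next I would use the area form $\omega$ preserved by $F$. Every $z\in C_i$ is a fixed point of $F$, so $\di F_z$ is an endomorphism of the single space $T_z\Sigma$, and the identity $F^*\omega=\omega$ evaluated at $z$ reads $\omega_z(\di F_z u,\di F_z v)=\omega_z(u,v)$ for all $u,v\in T_z\Sigma$; for a linear self-map of a two-dimensional space this says precisely that $\det \di F_z=1$. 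As the intrinsic determinant may be computed in the orthonormal frame $(\nu_i,\tau_i)$, we obtain $\det N_i=c_i^2 a_i=a_i=1$.

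With $a_i\equiv 1$ the curve in Theorem \ref{t:main2} degenerates to $(a_i-1,b_i)=(0,b_i)$. The standing non-degeneracy assumption that $(a_i,b_i)$ is never equal to $(1,0)$ forces $b_i$ to be nowhere zero, hence of constant sign along the connected curve $C_i\cong S^1$; its image therefore lies in one of the two open rays $\{0\}\times(0,\infty)$ or $\{0\}\times(-\infty,0)$, on each of which the argument is constant. Consequently $w(a_i-1,b_i)=0$, and Theorem \ref{t:main2} yields \eqref{e:cor1}. Substituting $w_i(g,F^*g)=0$ into \eqref{e:main1} with $h=F^*g$ then gives $[\mathcal C(F)]=2\chi(\Sigma)$, which is \eqref{e:cor2}.

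The step I expect to require the most care is the passage from $F^*\omega=\omega$ to $\det N_i=1$: one must use that boundary points are genuine fixed points, so that $\di F_z$ is an endomorphism of a single tangent space and the pullback condition becomes a determinant condition, and that this intrinsic determinant is frame-independent, allowing its evaluation in $(\nu_i,\tau_i)$. Everything else is either the direct bookkeeping of the columns of $N_i$ or the elementary observation that a loop confined to a single ray has vanishing winding number around the origin.
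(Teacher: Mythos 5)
Your proposal is correct and follows essentially the same route as the paper's own proof: $c_i=1$ from $\di F\cdot\tau_i=\tau_i$, $a_i=1$ from $\det N_i=1$, hence $(a_i-1,b_i)=(0,b_i)$ has vanishing winding number, and \eqref{e:cor2} follows from \eqref{e:main1}. Your extra care in justifying $\det\di F_z=1$ at boundary fixed points via the pullback of the area form is a welcome elaboration of a step the paper states tersely.
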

\begin{proof}
By \eqref{e:main2} the assertion is equivalent to showing $w_i(a_i-1,b_i)=0$. Since $F$ is the identity at the boundary we conclude that $\mathrm dF\cdot \tau_i=\tau_i$ and thus $c_i=1$ in \eqref{e:dF}. Since $F$ preserves an area form, it follows that $\det N_i=1$, which implies that $a_i=1$ in \eqref{e:dF}. Therefore, the curve $(a_i-1,b_i)=(0,b_i)$ is contained in the $y$-axis and does not cross $0$. We conclude that its winding number around the origin $w(a_i-1,b_i)$ vanishes.
\end{proof}
\begin{rmk}
Equation \eqref{e:cor2} was proved in \cite{AT}, when $\Sigma=D^2$, and $F$ satisfies some additional conditions, which hold, for instance, when $F$ is $C^1$-close to the identity, 
\end{rmk}
If we linearize the property of being a conformal point for a diffeomorphism at the identity of $\Sigma$, we get a corresponding condition for conformal points of vector fields on $\Sigma$. This condition is easier phrased after reinterpreting conformality in terms of complex geometry, as we explain next.
\subsection{Conformal points and complex structures}
Let $\jmath$ be the complex structure associated with the Riemannian metric $g$ and the orientation of $\Sigma$. In other words, $\jmath$ yields a section of $\mathrm{End}(T\Sigma)$ such that $v$ and $\jmath_zv$ form a positive, orthogonal basis of $T_z\Sigma$ for all $z\in\Sigma$ and all $v\in T_z\Sigma\setminus\{0\}$. Thus, $\jmath_z$ has the matrix expression
\begin{equation}\label{e:j}
\begin{pmatrix}
	0&-1\\ 1&0
\end{pmatrix}
\end{equation}
with respect to a positive, orthonormal basis of $T_z\Sigma$. An endomorphism $H\colon T_z\Sigma\to T_z\Sigma$ commutes with $\jmath_z$ if and only if $H$ has the matrix expression
\[
\begin{pmatrix}
	a&-b\\ b&a
\end{pmatrix}\qquad a,b\in\R,
\]
in such a basis. In particular, we deduce that $H$ is, up to a scalar multiple, a rotation matrix. Analogously, $H$ anticommutes with $\jmath_z$ if and only if $H$ has the matrix expression
\[
\begin{pmatrix}
	a&b\\ b&-a
\end{pmatrix}\qquad a,b\in\R,
\]
in such a basis. In particular, we deduce that $E^a$, see \eqref{eqn:bundle_E_A}, is exactly the bundle of endomorphisms anticommuting with $\jmath$. Therefore, if we denote by $E^c\to\Sigma$ the bundle of endomorphisms commuting with $\jmath$, we get the splitting
\begin{equation}\label{e:splitting_End_TSigma}
	\mathrm{End}(T\Sigma)=E^c\oplus E^a.
\end{equation}
Furthermore, as $\jmath$-complex line bundle we can write
	\[
	E^a\cong T\otimes \overline{T^*}, \qquad T:=T^{(1,0)}\Sigma,
	\]
	where $T^{(1,0)}\Sigma$ is the holomorphic tangent bundle of $\Sigma$ and $\overline{T^*}$ denotes the conjugate of the dual bundle of $T$. With this identification, a local section of $E^a$ is given by $\frac{\partial}{\partial z}\otimes\di\bar z$ where $z$ is a local holomorphic coordinate compatible with $\jmath$. Thus, the Euler number of $E^a$ as real oriented plane bundle coincides with its Chern number as complex line bundle. Using that $c_1(T)=\chi(\Sigma)$, this gives another derivation of \eqref{e:chi} by computing
	\[
	c_1(E^a)=c_1(T\otimes \overline{T^*})=c_1(T)-c_1(T^*)=c_1(T)+c_1(T)=2c_1(T)=2\chi(\Sigma).
	\]
Finally, let us assume that $z$ is a conformal point of an orientation-preserving  diffeomorphism $F\colon\Sigma\to\Sigma$. Then, 
\begin{equation}\label{e:cc}
(F^*g)_z=cg_z\ \text{for some }c>0. 
\end{equation}
If we denote by $M$ the matrix representation of $\mathrm d_zF$ with respect to positive, orthonormal bases of $T_z\Sigma$ and $T_{F(z)}\Sigma$, then \eqref{e:cc} can be rewritten as 
\[
M^TM=cI\,.
\]
This condition is equivalent to saying that $M$ is, up to a scalar multiple, a rotation matrix. Since $\jmath_z$ and $\jmath_{F(z)}$ are represented by the matrix \eqref{e:j}, we see that $\mathrm d_zF\jmath_z=\jmath_{F(z)}\mathrm d_zF$. We conclude that $z$ is a conformal point of $F$ if and only if $F$ is $\jmath$-holomorphic at $z$ with respect to $\jmath$-holomorphic coordinates around $z$ and $F(z)$.

\subsection{Conformal points of vector fields}
Let $f$ be a vector field on $\Sigma$. Let $F_t:\Sigma\to\Sigma$ be the time-$t$ map of the flow of $f$. Suppose that $z\in\Sigma$ is a point such that $F_t(z)\in\mathcal C(g,(F_t)^*g)$ for all $t$ close to zero. In particular, $F_t$ is $\jmath$-holomorphic at $z$ in a local $\jmath$-holomorphic chart for all small $t$. Taking the derivative in $t$ at $t=0$, we conclude that the vector field $f$ is $\jmath$-holomorphic at $z$. In other words $\bar\partial_{\jmath} f$ is a section of $E^a$ which vanishes at $z$. Here, $\bar\partial_{\jmath}$ denotes the Cauchy--Riemann operator sending sections of the holomorphic tangent bundle $T=T^{(1,0)}\Sigma$ to sections of $T\otimes \overline{T^*}\cong E^a$, and $f$ is identified with its image under the isomorphism
\[
T\Sigma\to T^{\mathbb C}\Sigma\cong T^{(1,0)}\Sigma\oplus T^{(0,1)}\Sigma\to T^{(1,0)}\Sigma,
\]
where $T^{\mathbb C}\Sigma$ is the complexification of $T\Sigma$. 

Let $\mathcal C(\jmath,f)$ be the set of zeros of $\bar\partial_{\jmath}f$. If $\mathcal C(\jmath,f)$ is finite and $\mathcal C(\jmath,f)\subset\Sigma\setminus\partial \Sigma$, then we can associate an index $\mathrm{ind}_{(\jmath,f)}(z)$ to each $z\in \mathcal C(\jmath,f)$ and a winding number $w_i(\jmath,f)$ representing the relative winding number of $\bar{\partial}_\jmath f$ with respect to the canonical section $R_i$ along $C_i$ for every $i=1,\ldots,n$. Defining the algebraic count 
\[
	[\mathcal C(\jmath,f)]:=\sum_{z\in \mathcal C(\jmath,f)}\mathrm{ind}_{(\jmath,f)}(z),
\]
we get the following consequence of Theorem \ref{t:main1}.(1).
\begin{cor}\label{c:vf}
Let $\jmath$ be a complex structure on a compact surface $\Sigma$ and $f$ a vector field on $\Sigma$ such that $\mathcal C(\jmath,f)$ is finite and $\mathcal C(\jmath,f)\subset\Sigma\setminus\partial \Sigma$. Then the equation
\begin{equation*}
	[\mathcal C(\jmath,f)]=2\chi(\Sigma)+\sum_{i=1}^nw_i(\jmath,f)
\end{equation*}
holds.
\end{cor}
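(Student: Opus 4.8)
The plan is to reduce the statement directly to Theorem \ref{t:main1}.(1) by exhibiting $\bar\partial_\jmath f$ as the section $H^a$ associated with a suitable symmetric bilinear two-tensor field $h$. First I would fix a Riemannian metric $g$ on $\Sigma$ compatible with $\jmath$, in the sense that $\jmath$ is the complex structure associated with $g$ and the orientation of $\Sigma$; such a $g$ always exists, for instance by taking any metric $g_0$ and averaging, $g(u,v):=g_0(u,v)+g_0(\jmath u,\jmath v)$. With this choice the bundle $E^a$ built from $g$ coincides, as discussed in the excerpt via the splitting \eqref{e:splitting_End_TSigma}, with the bundle of endomorphisms anticommuting with $\jmath$, so that $\bar\partial_\jmath f$ is genuinely a section of $E^a\to\Sigma$.

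Next I would produce the tensor $h$. Since $(\bar\partial_\jmath f)_z$ lies in $E^a_z$, it is $g$-symmetric and trace-free for every $z$; hence
\[
h_z(u,v):=g_z\big(u,(\bar\partial_\jmath f)_z v\big),\qquad u,v\in T_z\Sigma,
\]
defines a symmetric bilinear two-tensor field. Writing $H$ for the endomorphism representing $h$ via \eqref{e:H}, we get $H=\bar\partial_\jmath f$, and because $\mathrm{tr}(\bar\partial_\jmath f)=0$ the definition \eqref{e:Ha} gives $H^a=\bar\partial_\jmath f$. Consequently $\mathcal C(g,h)=\{z:H^a_z=0\}=\mathcal C(\jmath,f)$, which is finite and contained in $\Sigma\setminus\partial\Sigma$ by hypothesis.

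Finally I would match the numerical data. By construction the index $\mathrm{ind}_{(\jmath,f)}(z)$ and the boundary winding number $w_i(\jmath,f)$ are, by their very definition, the index and the winding number of the section $\bar\partial_\jmath f=H^a$ of $E^a$; moreover the canonical reference section $R_i$ used for $w_i(\jmath,f)$ is the reflection $R^i$ along $T_z\partial\Sigma$ appearing in \eqref{e:w}, which depends only on the boundary line field and not on the auxiliary metric. Therefore $\mathrm{ind}_{(\jmath,f)}(z)=\mathrm{ind}_{(g,h)}(z)$ and $w_i(\jmath,f)=w_i(g,h)$, whence $[\mathcal C(\jmath,f)]=[\mathcal C(g,h)]$. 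Applying Theorem \ref{t:main1}.(1) to $h$ then yields $[\mathcal C(g,h)]=2\chi(\Sigma)+\sum_{i=1}^n w_i(g,h)$, which is exactly the claimed identity.

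I expect no serious obstacle here: the statement is essentially a repackaging of Theorem \ref{t:main1}.(1), which is a Poincar\'e--Hopf formula for arbitrary sections of the oriented plane bundle $E^a$. The only points requiring genuine care are the existence of a $\jmath$-compatible metric $g$ and the verification that the canonical boundary section $R_i$ is independent of this auxiliary choice, so that the index and winding-number bookkeeping transports faithfully between the analytic formulation in terms of $\bar\partial_\jmath f$ and the tensorial formulation in terms of $H^a$.
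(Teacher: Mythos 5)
Your proposal is correct and matches the paper's intent exactly: the paper presents Corollary \ref{c:vf} as an immediate consequence of Theorem \ref{t:main1}.(1), relying on the preceding discussion that $\bar\partial_\jmath f$ is a section of $E^a$, and your argument simply makes the reduction explicit by choosing a $\jmath$-compatible metric and realizing $\bar\partial_\jmath f$ as the section $H^a$ of the tensor $h_z(u,v)=g_z(u,(\bar\partial_\jmath f)_z v)$. The bookkeeping of indices and boundary winding numbers is handled as the paper implicitly assumes, so there is nothing to add.
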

\subsection{An open question}
Given any Riemannian metric $g$ on $\Sigma$ and diffeomorphism $F\colon\Sigma\to\Sigma$, it is interesting to ask which further restrictions must the points $z$ of $\mathcal C(g,F^*g)$, their indices $\mathrm{ind}_{(g,F^*g)}(z)$ and the numbers $w_i(g,F^*g)$ satisfy besides equation \eqref{e:main1}. This question is related to the uniformization theorem for compact surfaces with boundary via Theorem \ref{t:main1}.(2). For instance, given any two metrics $g$ and $h$ on $\Sigma=S^2$ or $\Sigma=D^2$, we can find a diffeomorphism $F\colon\Sigma\to\Sigma$ such that $F^*g$ and $h$ are conformal at every point \cite[Theorem 1]{OPS}. Thus, $\mathcal C(g,F^*g)=\mathcal C(g,h)$, $\mathrm{ind}_{(g,F^*g)}(z)=\mathrm{ind}_{(g,h)}(z)$ for every $z$ in this set, and $w_i(g,F^*g)=w_i(g,h)$ for all $i=1,\ldots,n$. As a consequence of Theorem \ref{t:main1}.(2), there are no further restrictions in this case.

On the other hand, on a general surface $\Sigma$ there are metrics $g$ and $h$ such that $h$ and $F^*g$ are not conformal at all points, no matter how we choose the diffeomorphism $F$. The easiest examples where this happens is when $\Sigma=\mathbb T^2$, or when $\Sigma=D^2$ and we require in addition the diffeomorphism $F$ to be the identity at the boundary. For instance, on $\mathbb T^2$ conformal classes of metrics $g$ are classified by lattices $\Gamma$ in $\mathbb C$, up to Euclidean isometries and homotheties, where $g$ is the Riemannian metric on $\mathbb T^2=\mathbb C/\Gamma$ induced by the Euclidean metric on $\mathbb C$. To get an example on the disc, let us identify $D^2$ with the unit Euclidean disc in $\mathbb C$. Let $g$ be the Euclidean metric on $D^2$. Recall that the group of diffeomorphisms $\varphi\colon D^2\to D^2$ such that $g$ and $\varphi^*g$ are conformal at all points consists of the Möbius transformations preserving $D^2$. Consider $G\colon D^2\to D^2$ to be any diffeomorphism such that $G|_{\partial D^2}\neq \varphi|_{\partial D^2}$ for all $\varphi$. Such a $G$ surely exists since if $\varphi$ is not the identity, then $\varphi$ can have at most two fixed points on the boundary. If we define $h:=G^*g$, then there is no diffeomorphism $F\colon D^2\to D^2$ which is identity at the boundary and such that $F^*h$ and $g$ are conformal at every point. Indeed, if such an $F$ exists, then $(G\circ F)^*g=F^*G^*g=F^*h$ is conformal to $g$ at all points, which means that $F\circ G=\varphi$ for some Möbius transformation $\varphi$ preserving the disc. Since $F$ is the identity at the boundary, this would imply that $G=\varphi$ on the boundary. A contradiction.

Thus, in the case of $\mathbb T^2$ and of $D^2$, it is meaningful to ask if there is a metric $g$ and a diffeomorphism $F$ (being the identity on the boundary in the case of $D^2$) such that $\mathcal C(g,F^*g)$ is empty. If one can find a vector field $f$ (vanishing on the boundary in the case of $D^2$) such that $\mathcal C(\jmath, f)=\varnothing$, then $\mathcal C(g,F^*_tg)=\varnothing$ for small $t\neq0$, as well, where $F_t$ is the time-$t$ map of the flow of $f$.

In the case of $\Sigma=\mathbb T^2$, we can readily find such a vector field for all conformal classes of complex structures. Indeed, let $\mathbb T^2=\mathbb C/\Gamma$ where $\Gamma$ is a lattice in $\mathbb C$ and let $\jmath$ be the complex structure on $\Sigma$ induced by that on $\mathbb C$. Up to Euclidean isometries and homotheties, we can assume that $\Gamma$ is generated by $1,\tau\in\mathbb C$, where $\tau=a+ib$ with $b>0$. Consider the vector field which in a global holomorphic trivialization of $T^{(1,0)}\Sigma$ is written as $f(z)=e^{\frac{2\pi i}{b}\mathrm{Im}\,z}$. Notice that $f$ is well-defined since it is invariant under translations by $1$ and $\tau$. Moreover,
\[
\bar{\partial}_\jmath f(z)=\frac{\partial}{\partial \bar z}e^{\frac{\pi}{b}(z-\bar z)}=-\frac{\pi}{b}f(z),
\] 
which is nowhere vanishing. 

However, we do not know if such a vector field $f$ exists on $D^2$. Since vector fields on $D^2$ correspond to functions in a global trivialization of $T^{(1,0)}D^2$, we have the following open question.
\begin{qst}\label{q:disc}
Does there exist a smooth function $f\colon D^2\to\mathbb C$ satisfying the following two conditions?
\begin{enumerate}  
\itemsep=1.5ex
\item $\forall z\in D^2,\quad \frac{\partial f}{\partial \bar z}(z)\neq0$.
\item $\forall z\in\partial D^2,\quad f(z)=0$.
\end{enumerate}
\end{qst}
\subsection{Plan of the paper}
Theorem \ref{t:main1} is proven in Section \ref{s:two}. Theorem \ref{t:main2} is proven in Section \ref{s:three}.
\subsection{Acknowledgments}
P.A.~and G.B.~are partially supported by the Deutsche Forschungsgemeinschaft 
under Germany's Excellence Strategy EXC2181/1 - 390900948 (the Heidelberg STRUCTURES Excellence Cluster), the Collaborative Research Center SFB/TRR 191 - 281071066 (Symplectic Structures in Geometry, Algebra and Dynamics), and the Research Training Group RTG 2229 - 281869850 (Asymptotic Invariants and Limits of Groups and Spaces). G.B.~warmly thanks Thomas Rot for stimulating discussions around the topics of this paper.
\section{Proof of Theorem \ref{t:main1}}\label{s:two}
We prove Theorem \ref{t:main1}.(1). Let $h$ be a symmetric bilinear two-tensor field over $\Sigma$ such that $\mathcal C(g,h)$ is finite and $\mathcal C(g,h)\subset \Sigma\setminus\partial \Sigma$. Recall the definition of $H$ and $H^a$ from \eqref{e:H} and \eqref{e:Ha}.

If $\Sigma$ has no boundary, then $[\mathcal C(g,h)]=e(E^a)=2\chi(\Sigma)$ by the Poincar\'e--Hopf Theorem for oriented plane bundles \cite[Theorem 11.17]{BT}. If $\Sigma$ has boundary, let $\hat\Sigma$ be the closed, oriented surface that we obtain from $\Sigma$ by gluing a disc $D_1,\ldots,D_n$ along each boundary component $C_1,\ldots,C_n$. The gluing maps $D^2\to D_i$ have the Euclidean disc
\[
D^2=\{(x,y)\in\R^2\ |\ x^2+y^2\leq 1\}
\]
as domain and send the boundary $\partial D^2$ traversed in the positive sense to $\bar C_i$, that is, to $C_i$ traversed in the negative sense. In this way, the gluing maps are positively oriented with respect to the orientation on $\hat \Sigma$.

We let $\hat g$ be any extension of $g$ to $\Sigma$ as a Riemannian metric. On the bundle $E^a|_{D_i}$ we choose a nowhere vanishing section $M^i$ defined as the reflection along the direction of $\partial_x\in TD^2$. Let $w_{\bar C_i}(H^a,M^i)$ be the winding number of $H^a$ with respect to $M^i$ along $C_i$ traversed in the negative direction. Then 
\[
w_{\bar C_i}(H^a,M^i)=w_{\bar C_i}(H^a,R^i)+w_{\bar C_i}(R^i,M^i)=-w_{C_i}(H^a,R^i)+w_{\partial D^2}(R^i,M^i)=-w_i(g,h)+2,
\]
where we have used that $\bar C_i$ is identified with $\partial D^2$ and that the unoriented line tangent to $\partial D^2$ rotates twice with respect to the horizontal unoriented line. By the Extension Theorem in \cite[p.~145]{GP}, it is possible to construct an extension $\hat h$ of $h$ to $\hat \Sigma$ such that $\mathcal C(\hat g,\hat h)=\mathcal C(g,h)\cup\{z_1,\ldots,z_n\}$, where $z_1,\ldots,z_n$ are the centers of the discs $D_1,\ldots,D_n$ and
\begin{equation}\label{e:z_i}
\mathrm{ind}_{(\hat g,\hat h)}(z_i)=w_{\bar C_i}(H^a,M^i)=2-w_i(g,h).
\end{equation}
Therefore,
\[
[\mathcal C(g,h)]=[\mathcal C(\hat g,\hat h)]-\sum_{i=1}^n\mathrm{ind}_{(\hat g ,\hat h)}(z_i)=2\chi(\hat\Sigma)-2n+\sum_{i=1}^nw_i(g,h)=2\chi(\Sigma)+\sum_{i=1}^nw_i(g,h),
\]
where we used that $\chi(\Sigma)+n=\chi(\hat\Sigma)$ as follows from the formula $\chi(A\cup B)=\chi(A)+\chi(B)-\chi(A\cap B)$. We have thus completed the proof of Theorem \ref{t:main1}.(1). 

Let us first prove Theorem \ref{t:main1}.(2) when $\Sigma$ has no boundary. Let us consider an embedded closed disc $D$ containing $\mathcal C$ in its interior. There is a section $H^{\mathrm{out}}$ of $E^a$ which is nowhere vanishing on $\Sigma\setminus \mathring{D}$ and there is a section $H^{\mathrm{in}}$ which is nowhere vanishing over $D$. The winding number of $H^{\mathrm{out}}$ with respect to $H^{\mathrm{in}}$ along $\partial D$ is $w(H^{\mathrm{out}},H^{\mathrm{in}})=2\chi(\Sigma)$. For each $z\in\mathcal C$ consider an embedded closed disc $D^z$ centered at $z$ and contained in $\mathring{D}$. After shrinking the discs $D^z$ we may assume that they are pairwise disjoint. Let $H^z$ be a section of $E^a|_{D^z}$ which has just one zero at $z$ with index $\mathrm{ind}(z)=\imath(z)$. Thus the winding number of $H^z$ with respect to $H^{\mathrm{in}}$ along $\partial D^z$ is $w(H^z,H^{\mathrm{in}})=\imath(z)$. Since $2\chi(\Sigma)=\sum_{z\in \mathcal C}\iota(z)$ by assumption, we get
\[
w(H^{\mathrm{out}},H^{\mathrm{in}})=\sum_{z\in \mathcal C}w(H^z,H^\mathrm{in}).
\]
Consider the surface
\[
\tilde \Sigma:=D\setminus\bigsqcup_{z\in \mathcal C}\mathring D^z.
\]
It satisfies $\partial\tilde\Sigma=\partial D\sqcup (\sqcup_{z\in\mathcal C}\overline{\partial D^z})$. Since $w(H^{\mathrm{out}},H^{\mathrm{in}})-\sum_{z\in \mathcal C}w(H^z,H^\mathrm{in})=0$, the Extension Theorem in \cite[p.~145]{GP} implies that there is a nowhere vanishing section $\tilde H$ of $E^a|_{\tilde \Sigma}$ coinciding with $H^{\mathrm{out}}$ on $\partial D$ and with $H^z$ on every $\partial D^z$. Thus, $H^{\mathrm{out}}$, $\tilde H$, and all $H^z$ glue together to yield a section $H$ of $E^a\to\Sigma$ having the desired properties.

When $\Sigma$ has boundary, we construct the closed surface $\hat\Sigma$ as in the proof of Theorem \ref{t:main1}.(1). We define $\hat{\mathcal C}:=\mathcal C\cup\{z_1,\ldots,z_n\}$ and $\hat\imath\colon\hat{\mathcal C}\to\Z$ as the extension of $\imath$ such that $\imath(z_i)=2-w_i$ for all $i=1,\ldots,n$. Applying Theorem \ref{t:main1}.(2) for closed surfaces to $\hat\Sigma$ and $\hat\imath$ and using \eqref{e:z_i} yields Theorem \ref{t:main1}.(2) for the case of surfaces with boundary, as well.

\section{Proof of Theorem \ref{t:main2}}\label{s:three}
Let $C_i$ be a component of $\partial \Sigma$ for some $i\in\{1,\ldots,n\}$. There is $j\in\{1,\ldots,n\}$ such that $F(C_i)=C_j$. Recall that $\di F|_{C_i}$ is expressed by the matrix 
\[
N_i=c_i\begin{pmatrix}
	a_i&0\\b_i&1
\end{pmatrix}
\]
with respect to the positive orthonormal bases $\nu_i,\tau_i$ and $\nu_j,\tau_j$.

The metric $F^*g|_{C_i}$ is represented by the endomorphism $\di F^T\cdot\di F$ via \eqref{e:H}. A computation shows that the matrix representing $\di F^T\cdot\di F$ with respect to the basis $\nu_i,\tau_i$ is
\[
N^T_iN_i=c^2_iQ_i, \quad\text{with}\quad Q_i=\begin{pmatrix}
	a^2_i+b^2_i&b_i\\
	b_i&1
\end{pmatrix}.
\]
We point out that the condition that $(a_i,b_i)$ is never equal to $(1,0)$ is equivalent to $Q_i$ having distinct eigenvalues, since $Q_i$ is symmetric. Let $q_i\colon C_i\to \R P^1\cong\R/\pi\Z$ be the eigendirection of $Q_i$ with larger eigenvalue. By \eqref{e:w}, $w_i(g,F^*g)$ is the degree of the map $q_i\colon C_i\to\R/\pi\Z$. Therefore, our goal is to show that the degree of $q_i$ is equal to the winding number of $(a_i-1,b_i)\colon C_i\to\R^2$ around the origin. To this purpose, let us parametrize $C_i$ in the positive direction by $\theta_i\in \R/2\pi\Z$ and, to ease notation, let us drop all the subscripts $i$ in what follows.

We may assume without loss of generality that the curve $(a-1,b)\colon\R/2\pi\Z\to\R^2$ intersects the positive real axis transversely. In this case $w(a-1,b)$ counts the number of points $\theta_0\in\R/2\pi\Z$ such that $(a(\theta_0)-1,b(\theta_0))$ lies on the positive real axis, namely $a(\theta_0)>1$ and $b(\theta_0)=0$, with sign: the intersection is counted positively if $b'(\theta_0)>0$ and negatively if $b'(\theta_0)<0$.

On the other hand, the degree of $q$ is computed using a regular value $\xi\in\R/\pi\Z$ of $q$. Being regular means that $q'(\theta_0)\neq0$ for all $\theta_0\in q^{-1}(\xi)$. In this case, the degree of $q$ counts number of points $\theta_0\in q^{-1}(\xi)$ with sign: the point $\theta_0$ is counted positively if $q'(\theta_0)>0$ and negatively if $q'(\theta_0)<0$. 

Choosing $\xi=0$, we see that $\theta_0\in q^{-1}(0)$ if and only if $(1,0)\in\R^2$ is an eigenvector of $Q$ with eigenvalue larger than $1$. This happens exactly when $b(\theta_0)=0$ and $a(\theta_0)>1$, that is when $(a-1,b)$ intersects the positive real axis. Therefore, we prove that $0$ is a regular value of $q$ and that $w(a-1,b)$ is the degree of $q$ if we can show that for every such $\theta_0$ the numbers $b'(\theta_0)$ and $q'(\theta_0)$ have the same sign. 

For this purpose, let $v(\theta)=(x(\theta),y(\theta))\in\R^2$ be a generator of the line $q(\theta)$ such that $v(\theta_0)=(1,0)$ and write $\lambda(\theta)$ for the corresponding eigenvalue of $Q(\theta)$, so that $\lambda(\theta_0)=a(\theta_0)$. Then $q'(\theta_0)=y'(\theta_0)$. To compute $y'(\theta_0)$ we differentiate the vector equation $\big(Q(\theta)-\lambda(\theta)I\big)v(\theta)=0$ at $\theta_0$:
\[
\big(Q(\theta_0)-\lambda(\theta_0)I\big)v'(\theta_0)+\big(Q'(\theta_0)-\lambda'(\theta_0)I\big)v(\theta_0)=0.
\]
Therefore, substituting the values for $Q(\theta_0)$, $\lambda(\theta_0)$ and $Q'(\theta_0)$ and taking the $y$-component of the vector equation, we get
\[
\big(1-a(\theta_0)\big)y'(\theta_0)+b'(\theta_0)=0.
\]
Thus,
\[
q'(\theta_0)=y'(\theta_0)=\frac{b'(\theta_0)}{a(\theta_0)-1}
\]
from which we see that $q'(\theta_0)$ and $b'(\theta_0)$ have the same sign since $a(\theta_0)>1$. This completes the proof.

\bibliography{biblio_conformal}
\bibliographystyle{plain}
\end{document}